\theoremstyle{plain}
\newtheorem{theorem}{Theorem}[section]
\newtheorem{lemma}[theorem]{Lemma}
\newtheorem{claim}{Claim}
\newtheorem{note}[theorem]{Note}
\newtheorem{standard}[theorem]{Standard Lemma}
\theoremstyle{definition}
\numberwithin{equation}{section}
\numberwithin{equation}{section}
\begin{document}
\title[Additivity of maps preserving products $AP\pm PA^{*}$]%
{Additivity of maps preserving products $AP\pm PA^{*}$ on
$C^{*}$-algebras}
\author[A. Taghavi, V. Darvish and H. Rohi]%
{Ali Taghavi*, Vahid Darvish and Hamid Rohi}

\newcommand{\acr}{\newline\indent}
\address{\llap{*\,}Department of Mathematics,\\ Faculty of Mathematical
Sciences,\\ University of Mazandaran,\\ P. O. Box 47416-1468,\\
Babolsar, Iran.} \email{taghavi@umz.ac.ir,
v.darvish@stu.umz.ac.ir,h.rohi@stu.umz.ac.ir }

 \subjclass[2010]{47B48, 46L10}
\keywords{New product, Additive, Prime $C^{*}$-algebras}

\begin{abstract}
Let $\mathcal{A}$ and $\mathcal{B}$ be two prime $C^{*}$-algebras.
In this paper, we investigate the additivity of map $\Phi$ from
$\mathcal{A}$ onto $\mathcal{B}$ that are bijective unital and
satisfies
$$\Phi(AP+\lambda PA^{*})=\Phi(A)\Phi(P)+\lambda \Phi(P)\Phi(A)^{*},$$
for all $A\in\mathcal{A}$ and $P\in\{P_{1},I_{\mathcal{A}}-P_{1}\}$
where $P_{1}$ is a nontrivial projection in $\mathcal{A}$ and
$\lambda\in\{-1,+1\}$. Then, $\Phi$ is $*$-additive.
\end{abstract}

\maketitle

\section{Introduction}\label{intro}
Let $\mathcal{R}$ and $\mathcal{R^{'}}$ be rings. We say the map
$\Phi: \mathcal{R}\to \mathcal{R^{'}}$ preserves product or is
multiplicative if $\Phi(AB)=\Phi(A)\Phi(B)$ for all $A, B\in
\mathcal{R}$. The question of when a product preserving or
multiplicative map is additive was discussed by several authors, see
\cite{mar} and references therein. Motivated by this, many authors
pay more attention to the map on rings (and algebras) preserving the
Lie product $[A,B]=AB-BA$ or the Jordan product $A\circ B=AB+BA$
(for example, Refs.
ref~\cite{bai,bai2,bre,hak,ji,lu,lu2,lu3,mol2,qi}). These results
show that, in some sense, the Jordan product or Lie product
structure is enough to determine the ring or algebraic structure.
Historically, many mathematicians devoted themselves to the study of
additive or linear Jordan or Lie product preservers between rings or
operator algebras. Such maps are always called Jordan homomorphism
or Lie homomorphism. Here we only list several results
\cite{bei,bei2,bei3,her,jac,kad,mar,mir,mir2}.

Let $\mathcal{R}$ be a $*$-ring. For $A,B\in\mathcal{R}$, denoted by
$A\bullet B=AB+BA^{*}$ and $[A,B]_{*}=AB-BA^{*}$, which are two
different kinds of new products. This product is found playing a
more and more important role in some research topics, and its study
has recently attracted many author's attention (for example, see
\cite{bre2,mol,sem}). A natural problem is to study whether the map
$\Phi$ preserving the new product on ring or algebra $\mathcal{R}$
is a ring or algebraic isomorphism. Obviously, if $\Phi$ is linear
or preserving star operation, then $\Phi$ preserves this new product
if and only if it preserves Lie product. Without the linearity and
star-preserving assumptions, what will occur? In \cite{cui}, J. Cui
and C. K. Li proved a bijective map $\Phi$ on factor von Neumann
algebras which preserves the new product ($[A,B]_{*}$) must be a
$*$-isomorphism. Moreover, in \cite{li} C. Li et al, discussed the
nonlinear bijective mapping preserving the new product ($A\bullet
B$). They proved that such mapping on factor von Neumann algebras is
also $*$-ring isomorphism. These two articles discussed new products
for arbitrary operators on factor von Neumann algebras.\\
In this paper, we will discuss such a bijective unital map (not
necessarily linear) on prime $C^{*}$-algebra which preserving both
new products for which one of them is projection must be
$*$-additive (i.e., additive and star-preserving).

Let $\mathbb{R}$ and $\mathbb{C}$ denote respectively the real field
and complex field and for real part and imaginary part of an
operator $T$ we will use $\Re(T)$ and $\Im(T)$, respectively. It is
well known that $C^{*}$-algebra $\mathcal{A}$ is prime, in the sense
that $A\mathcal{A}B=0$ for $A,B\in \mathcal{A}$ implies either $A=0$
or $B=0$.
\section{Main Results}
We need the following lemmas for proving our Main Theorem.
\begin{lemma}\label{0}
Let $\mathcal{A}$ and $\mathcal{B}$ be two $C^{*}$-algebras with
identities and $\Phi:\mathcal{A}\to\mathcal{B}$ be a unital map
which satisfies $\Phi(AP-PA^{*})= \Phi(A)\Phi(P)-\Phi(P)\Phi(A)^{*}$
for all $A\in\mathcal{A}$ and some $P\in\mathcal{A}$, then
$\Phi(0)=0.$
\end{lemma}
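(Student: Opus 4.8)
The plan is to exploit the unitality of $\Phi$ together with the self-adjointness of the identity, so that a single well-chosen substitution into the defining relation makes both sides collapse. The defining identity is
$$\Phi(AP-PA^{*})=\Phi(A)\Phi(P)-\Phi(P)\Phi(A)^{*},$$
valid for all $A\in\mathcal{A}$ and the fixed $P$. The key observation is that I want to feed in an argument $A$ for which the new product $AP-PA^{*}$ equals $0$, so that the left-hand side becomes exactly $\Phi(0)$, while the right-hand side simplifies to something I can recognize as $0$.

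The natural first attempt, $A=0$, does not quite work: it only yields $\Phi(0)=\Phi(0)\Phi(P)-\Phi(P)\Phi(0)^{*}$, which still involves the unknown $\Phi(0)$ on the right and gives no immediate conclusion. The better choice is $A=I_{\mathcal{A}}$. Since the identity is self-adjoint, $I_{\mathcal{A}}^{*}=I_{\mathcal{A}}$, and hence $I_{\mathcal{A}}P-PI_{\mathcal{A}}^{*}=P-P=0$. Substituting $A=I_{\mathcal{A}}$ into the relation therefore gives $\Phi(0)$ on the left and $\Phi(I_{\mathcal{A}})\Phi(P)-\Phi(P)\Phi(I_{\mathcal{A}})^{*}$ on the right.

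Next I would invoke that $\Phi$ is unital, so $\Phi(I_{\mathcal{A}})=I_{\mathcal{B}}$, and consequently $\Phi(I_{\mathcal{A}})^{*}=I_{\mathcal{B}}^{*}=I_{\mathcal{B}}$ as well. The right-hand side then telescopes:
$$\Phi(0)=I_{\mathcal{B}}\,\Phi(P)-\Phi(P)\,I_{\mathcal{B}}=\Phi(P)-\Phi(P)=0.$$
This yields $\Phi(0)=0$ directly. There is essentially no obstacle here beyond spotting the correct substitution: the whole argument rests on recognizing that $A=I_{\mathcal{A}}$ annihilates the new product while unitality forces the right-hand side to vanish, so the lemma follows in one line once the substitution is made.
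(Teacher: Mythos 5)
Your proposal is correct and is essentially the paper's own proof: both substitute $A=I_{\mathcal{A}}$, note that $I_{\mathcal{A}}P-PI_{\mathcal{A}}^{*}=0$, and use unitality of $\Phi$ to make the right-hand side vanish. No differences worth noting.
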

\begin{proof}
Let $A=I$, we have
$\Phi(0)=\Phi(IP-PI)=\Phi(I)\Phi(P)-\Phi(P)\Phi(I)^{*}$. Since
$\Phi$ is unital, we have $\Phi(0)=0$.
\end{proof}

\begin{standard}
Let $\mathcal{A}$ and $\mathcal{B}$ be two $C^{*}$-algebras and
$\Phi:\mathcal{A}\to\mathcal{B}$ be a map which satisfies
$\Phi(AP+\lambda PA^{*})= \Phi(A)\Phi(P)+\lambda\Phi(P)\Phi(A)^{*}$
for all $A\in\mathcal{A}$ and some $P\in\mathcal{A}$ where
$\lambda\in\{-1,+1\}$. Let $A, B$ and $T$ be in $\mathcal{A}$ such
that $\Phi(T)=\Phi(A)+\Phi(B)$. Then we have
\begin{equation}\label{1a}
\Phi(TP+PT^{*})=\Phi(AP+PA^{*})+\Phi(BP+PB^{*}),
\end{equation}
\begin{equation}\label{2a}
\Phi(TP-PT^{*})=\Phi(AP-PA^{*})+\Phi(BP-PB^{*}).
\end{equation}
\end{standard}
\begin{proof}
We will just prove the equality (\ref{1a}).\\
Multiply the equalities $\Phi(T)=\Phi(A)+\Phi(B)$ and
$\Phi(T)^{*}=\Phi(A)^{*}+\Phi(B)^{*}$ by $\Phi(P)$ from the right
and the left, respectively. We get
$$\Phi(T)\Phi(P)=\Phi(A)\Phi(P)+\Phi(B)\Phi(P),$$
and
$$\Phi(P)\Phi(T)^{*}=\Phi(P)\Phi(A)^{*}+\Phi(P)\Phi(B)^{*}.$$
By adding two equations, we have
$$\Phi(TP+PT^{*})=\Phi(AP+PA^{*})+\Phi(BP+PB^{*}).$$
\end{proof}
Our main theorem is as follows:
\\
\\
\textbf{Main Theorem.} Let $\mathcal{A}$ and $\mathcal{B}$ be two
prime $C^{*}$-algebras with $I_{\mathcal{A}}$ and $I_{\mathcal{B}}$
the identities of them, respectively. If $\Phi : \mathcal {A}\to
\mathcal{B}$ is a bijective unital map which satisfies
$\Phi(AP+\lambda PA^{*})= \Phi(A)\Phi(P)+\lambda\Phi(P)\Phi(A)^{*}$
for all $A\in\mathcal{A}$ and $P\in\{P_{1},I_{\mathcal{A}}-P_{1}\}$
where $P_{1}$ is a nontrivial projection in $\mathcal{A}$ and
$\lambda\in\{-1,+1\}$. Then, $\Phi$ is $*$-additive.
\\
\\
\textit{Proof of Main Theorem.} Let $P_{2}=I_{\mathcal{A}}-P_{1}$.
Denote $\mathcal{A}_{ij}=P_{i}\mathcal{A}P_{j},\ i,j=1,2,$ then
$\mathcal{A}=\sum_{i,j=1}^{2}\mathcal{A}_{ij}$. For every
$A\in\mathcal{A}$ we may write $A=A_{11}+A_{12}+A_{21}+A_{22}$. In
all that follows, when we write $A_{ij}$, it indicates that
$A_{ij}\in\mathcal{A}_{ij}$.\\
For showing additivity of $\Phi$ on $\mathcal{A}$ we will use above
partition of $\mathcal{A}$ and give some claims that prove $\Phi$ is
additive on each $\mathcal{A}_{ij}, \ i,j=1,2$.

\begin{claim}\label{1112}
For every $A_{11}\in\mathcal{A}_{11}$ and  $B_{12}\in
\mathcal{A}_{12}$, we have
$$\Phi(A_{11}+B_{12})=\Phi(A_{11})+\Phi(B_{12}).$$
\end{claim}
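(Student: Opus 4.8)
The plan is to use surjectivity of $\Phi$ to choose $T\in\mathcal{A}$ with $\Phi(T)=\Phi(A_{11})+\Phi(B_{12})$, and then to show that necessarily $T=A_{11}+B_{12}$; injectivity of $\Phi$ will then finish the claim. Writing $T=T_{11}+T_{12}+T_{21}+T_{22}$ relative to the decomposition $\mathcal{A}=\sum_{i,j}\mathcal{A}_{ij}$, the goal reduces to proving $T_{11}=A_{11}$, $T_{12}=B_{12}$ and $T_{21}=T_{22}=0$.

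Since $\Phi(T)=\Phi(A_{11})+\Phi(B_{12})$, the Standard Lemma applies with $A=A_{11}$ and $B=B_{12}$, so for each $P\in\{P_{1},P_{2}\}$ and each sign we obtain
$$\Phi(TP\pm PT^{*})=\Phi(A_{11}P\pm PA_{11}^{*})+\Phi(B_{12}P\pm PB_{12}^{*}),$$
where $\Phi(0)=0$ by Lemma \ref{0}. I would next evaluate the right-hand arguments using the orthogonality relations $P_{i}P_{j}=\delta_{ij}P_{i}$ together with $A_{11}\in\mathcal{A}_{11}$, $B_{12}\in\mathcal{A}_{12}$ and $B_{12}^{*}\in\mathcal{A}_{21}$: for $P=P_{1}$ one gets $A_{11}P_{1}\pm P_{1}A_{11}^{*}=A_{11}\pm A_{11}^{*}$ and $B_{12}P_{1}\pm P_{1}B_{12}^{*}=0$, while for $P=P_{2}$ one gets $A_{11}P_{2}\pm P_{2}A_{11}^{*}=0$ and $B_{12}P_{2}\pm P_{2}B_{12}^{*}=B_{12}\pm B_{12}^{*}$.

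On the left-hand side I would expand $TP\pm PT^{*}$ into Peirce blocks, namely $TP_{1}\pm P_{1}T^{*}=(T_{11}\pm T_{11}^{*})+T_{21}\pm T_{21}^{*}$ and $TP_{2}\pm P_{2}T^{*}=(T_{12}\pm T_{12}^{*})+(T_{22}\pm T_{22}^{*})$, where the summands lie in distinct subspaces $\mathcal{A}_{ij}$. Applying injectivity of $\Phi$ to the four resulting equalities and comparing blocks yields $T_{21}=0$, $T_{12}=B_{12}$, together with the two pairs $T_{11}\pm T_{11}^{*}=A_{11}\pm A_{11}^{*}$ and $T_{22}\pm T_{22}^{*}=0$; adding the members of each pair gives $T_{11}=A_{11}$ and $T_{22}=0$. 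Thus $T=A_{11}+B_{12}$, which is the claim.

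The essential point, and the place where both signs $\lambda=\pm1$ are genuinely needed, is the final extraction of the diagonal blocks $T_{11}$ and $T_{22}$: a single sign controls only the Hermitian (or only the skew-Hermitian) part of these blocks, and it is the combination of the $+$ and $-$ relations that pins the diagonal entries down exactly. The remainder is the routine but careful bookkeeping of Peirce components, legitimized throughout by the orthogonality of the $\mathcal{A}_{ij}$ and by the bijectivity of $\Phi$.
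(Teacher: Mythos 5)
Your proposal is correct and follows essentially the same route as the paper: choose $T$ by surjectivity, apply the Standard Lemma with both signs for $P_{1}$ and $P_{2}$, and use injectivity plus the Peirce decomposition to force $T_{11}=A_{11}$, $T_{12}=B_{12}$, $T_{21}=T_{22}=0$. Your bookkeeping via Hermitian/skew-Hermitian parts is just a reformulation of the paper's use of $\Re$ and $\Im$, and your observation that the off-diagonal blocks are pinned down by a single sign while the diagonal blocks need both is a slightly sharper organization of the same argument.
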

Since $\Phi$ is surjective, we can find an element
$T=T_{11}+T_{12}+T_{21}+T_{22}\in\mathcal{A}$ such that
\begin{equation}\label{f1}
\Phi(T)=\Phi(A_{11})+\Phi(B_{12}),
\end{equation}
we should show $T=A_{11}+B_{12}$. We apply the standard lemma
(\ref{1a}) to (\ref{f1}) for $P_{1}$, then we can write
$$\Phi(TP_{1}+P_{1}T^{*})=\Phi(A_{11}P_{1}+P_{1}A_{11}^{*})+\Phi(B_{12}P_{1}+P_{1}B_{12}^{*}),$$
so,
$$\Phi(T_{11}+T_{21}+T_{11}^{*}+T_{21}^{*})=\Phi(A_{11}+A_{11}^{*}),$$ by injectivity of $\Phi$, we get
$2\Re(T_{11}+T_{21})=2\Re(A_{11})$ which implies that
$\Re(T_{11})=\Re(A_{11})$ and $\Re(T_{21})=0$. Similarly, we apply
the standard lemma (\ref{2a}) to (\ref{f1}) for $P_{1}$, we have
$\Im(T_{11})=\Im(A_{11})$ and $\Im(T_{21})=0$.
Hence, we can say $T_{11}=A_{11}$ and $T_{21}=0$.\\
Now, we apply the standard lemma (\ref{1a}) to (\ref{f1}) for
$P_{2}$, it follows
\begin{eqnarray*}
\Phi(TP_{2}+P_{2}T^{*})&=&\Phi(A_{11}P_{2}+P_{2}A_{11}^{*})+\Phi(B_{12}P_{2}+P_{2}B_{12}^{*})\\
&=&\Phi(B_{12}+B_{12}^{*}).
\end{eqnarray*}
So, we have $\Re(T_{12})=\Re(B_{12})$ and $\Re(T_{22})=0$.\\
Similarly, we apply the standard lemma (\ref{2a}) to (\ref{f1}) for
$P_{2}$, we will have $\Im(T_{12})=\Im(B_{12})$ and $\Im(T_{22})=0$
which implie $T_{12}=B_{12}$ and $T_{22}=0$. So, $T=A_{11}+B_{12}$.

\begin{claim}\label{1221}
For every $A_{12}\in\mathcal{A}_{12}$, $B_{21}\in\mathcal{A}_{21}$,
we have
$$\Phi(A_{12}+B_{21})=\Phi(A_{12})+\Phi(B_{21}).$$
\end{claim}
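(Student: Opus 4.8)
The plan is to mimic the argument already used for Claim \ref{1112}, exploiting the bijectivity of $\Phi$ together with the two forms of the standard lemma. By surjectivity, choose $T=T_{11}+T_{12}+T_{21}+T_{22}\in\mathcal{A}$ with $\Phi(T)=\Phi(A_{12})+\Phi(B_{21})$; the goal is to show $T=A_{12}+B_{21}$, i.e.\ to pin down all four blocks of $T$. The crucial bookkeeping is that right multiplication by $P_{1}$ annihilates the components with right index $2$, while $P_{2}$ annihilates those with right index $1$; combined with the adjoint, this means that $TP_{k}\pm P_{k}T^{*}$ sees only a fixed pair of blocks of $T$. The key difference from Claim \ref{1112} is that the diagonal blocks of $T$ should now both vanish rather than survive.

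First I would feed the relation $\Phi(T)=\Phi(A_{12})+\Phi(B_{21})$ into the standard lemma in the form (\ref{1a}) with $P=P_{1}$. A direct computation gives $TP_{1}+P_{1}T^{*}=T_{11}+T_{21}+T_{11}^{*}+T_{21}^{*}=2\Re(T_{11}+T_{21})$, whereas on the right-hand side the term coming from $A_{12}$ collapses, since $A_{12}P_{1}+P_{1}A_{12}^{*}=0$, and only $B_{21}P_{1}+P_{1}B_{21}^{*}=B_{21}+B_{21}^{*}$ remains. Injectivity of $\Phi$ and comparison of the $(1,1)$- and $(2,1)$-blocks then yield $\Re(T_{11})=0$ and $T_{21}=B_{21}$. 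Running the same step through (\ref{2a}) in place of (\ref{1a}) produces the imaginary-part analogue $\Im(T_{11})=0$ (the $(2,1)$-block merely reconfirming $T_{21}=B_{21}$), so that $T_{11}=0$.

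Next I would repeat both steps with $P=P_{2}$, where the roles of $A_{12}$ and $B_{21}$ are reversed: now $B_{21}P_{2}+P_{2}B_{21}^{*}=0$ while $A_{12}P_{2}+P_{2}A_{12}^{*}=A_{12}+A_{12}^{*}$, and $TP_{2}+P_{2}T^{*}=2\Re(T_{12}+T_{22})$. Comparing the $(2,2)$- and $(1,2)$-blocks gives $\Re(T_{22})=0$ and $T_{12}=A_{12}$, and the version (\ref{2a}) supplies $\Im(T_{22})=0$, whence $T_{22}=0$. Assembling the four conclusions, $T=T_{12}+T_{21}=A_{12}+B_{21}$, which is the claim. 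I expect the only real obstacle to be the routine but error-prone tracking of which blocks survive each product; the conceptual point is that, unlike in Claim \ref{1112}, here each diagonal block must be forced to zero, and this is precisely why both signs $\lambda=+1$ and $\lambda=-1$ are needed, the former controlling the self-adjoint part of each surviving block and the latter its skew-adjoint part.
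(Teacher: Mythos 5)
Your proposal is correct and follows essentially the same route as the paper: choose $T$ by surjectivity, then apply the standard lemma in both forms (\ref{1a}) and (\ref{2a}) with $P_{1}$ and with $P_{2}$, and compare blocks via injectivity to force $T_{11}=T_{22}=0$, $T_{21}=B_{21}$, $T_{12}=A_{12}$. Your observation that each off-diagonal block is pinned down exactly by a single application of (\ref{1a}) (since $T_{21}$ and $T_{21}^{*}$ lie in different corners) is a slight streamlining of the paper's bookkeeping, which instead extracts real parts from (\ref{1a}) and imaginary parts from (\ref{2a}), but the argument is the same.
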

Let $T=T_{11}+T_{12}+T_{21}+T_{22}\in\mathcal{A}$ be such that
\begin{equation}\label{b}
\Phi(T)=\Phi(A_{12})+\Phi(B_{21}).
\end{equation}
By applying the standard lemma (\ref{1a}) to (\ref{b}) for $P_{1}$,
we have
\begin{eqnarray*}
\Phi(TP_{1}+P_{1}T^{*})&=&\Phi(A_{12}P_{1}+P_{1}A_{12}^{*})+\Phi(B_{21}P_{1}+P_{1}B_{21}^{*})\\
&=&\Phi(B_{21}+B_{21}^{*}).
\end{eqnarray*}
Thus, $\Phi(2\Re(T_{11}+T_{21}))=\Phi(2\Re(B_{21}))$ which implies $\Re(T_{21})=\Re(B_{21})$ and $\Re(T_{11})=0$.\\
Similarly, we can obtain $\Im(T_{21})=\Im(B_{21})$ and
$\Im(T_{11})=0$ by applying (\ref{2a}) to (\ref{b}) for $P_{1}$.\\
Now, we apply the standard lemma (\ref{1a}) to (\ref{b}) for
$P_{2}$, it follows
$$\Phi(TP_{2}+P_{2}T^{*})=\Phi(A_{12}P_{2}+P_{2}A_{12}^{*})+\Phi(B_{21}P_{2}+P_{2}B_{21}^{*})=\Phi(A_{12}+A_{12}^{*}).$$
We have $\Phi(2\Re(T_{12}+T_{22}))=\Phi(2\Re(A_{12}))$, then
$\Re(T_{12})=\Re(A_{12})$ and $\Re(T_{22})=0$. In a similar way, by
applying the standard lemma (\ref{2a}) to (\ref{b}) for $P_{2}$, we
can obtain $\Im(T_{12})=\Im(A_{12})$ and $\Im(T_{22})=0$. Therefore
we have the result.

\begin{claim}\label{ijij}
For every $A_{ij}, B_{ij}\in \mathcal{A}_{ij}$ such that $1\leq
i\neq j\leq 2$, we have
$$\Phi(A_{ij}+B_{ij})=\Phi(A_{ij})+\Phi(B_{ij}).$$
\end{claim}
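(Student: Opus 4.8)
The plan is to imitate the scheme of Claims~\ref{1112} and \ref{1221}: by surjectivity of $\Phi$ choose an element $T=T_{11}+T_{12}+T_{21}+T_{22}$ with $\Phi(T)=\Phi(A_{ij})+\Phi(B_{ij})$, and then pin down the four corners of $T$ by feeding this identity into the standard lemma for $P_1$ and $P_2$ with both signs $\lambda=\pm 1$. It suffices to treat $i=1,j=2$; the case $i=2,j=1$ is identical after interchanging the roles of $P_1$ and $P_2$, which is legitimate since the hypothesis is assumed for both projections in $\{P_1,I_{\mathcal{A}}-P_1\}$.

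First I would exploit that $P_1$ annihilates both summands. Indeed $A_{12}P_1=P_1A_{12}^{*}=0$, and likewise for $B_{12}$, so $A_{12}P_1\pm P_1A_{12}^{*}=0=B_{12}P_1\pm P_1B_{12}^{*}$. Applying \eqref{1a} and \eqref{2a} to $\Phi(T)=\Phi(A_{12})+\Phi(B_{12})$ with $P=P_1$ and using $\Phi(0)=0$ (Lemma~\ref{0}), the right-hand sides vanish, so $\Phi(TP_1\pm P_1T^{*})=0$. Injectivity gives $TP_1\pm P_1T^{*}=0$; reading off the mutually orthogonal corners $\mathcal{A}_{11},\mathcal{A}_{12},\mathcal{A}_{21}$ forces $\Re(T_{11})=\Im(T_{11})=0$ and $T_{21}=0$, i.e. $T=T_{12}+T_{22}$. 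This step is routine and parallels the earlier claims.

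The remaining task is to show $T_{22}=0$ and $T_{12}=A_{12}+B_{12}$, and here a genuinely new feature appears: no single projection kills both summands. Applying the standard lemma with $P=P_2$ now gives, for the two signs,
\begin{equation*}
\Phi\bigl(T_{12}+T_{12}^{*}+T_{22}+T_{22}^{*}\bigr)=\Phi(A_{12}+A_{12}^{*})+\Phi(B_{12}+B_{12}^{*}),
\end{equation*}
\begin{equation*}
\Phi\bigl(T_{12}-T_{12}^{*}+T_{22}-T_{22}^{*}\bigr)=\Phi(A_{12}-A_{12}^{*})+\Phi(B_{12}-B_{12}^{*}),
\end{equation*}
whose right-hand sides are honest sums of two $\Phi$-values rather than a single one. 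The plan to finish is to collapse each right-hand side to one $\Phi$: using Claim~\ref{1221} (additivity on $\mathcal{A}_{12}+\mathcal{A}_{21}$) one rewrites $\Phi(A_{12}\pm A_{12}^{*})=\Phi(A_{12})+\Phi(\pm A_{12}^{*})$ and similarly for $B_{12}$, and one wants to recombine these into $\Phi\bigl((A_{12}+B_{12})\pm(A_{12}+B_{12})^{*}\bigr)$. Once this collapse is achieved, injectivity applies, and matching the $\mathcal{A}_{22}$-components in the two displays yields $T_{22}+T_{22}^{*}=0$ and $T_{22}-T_{22}^{*}=0$, hence $T_{22}=0$, while matching the $\mathcal{A}_{12}$-components yields $T_{12}=A_{12}+B_{12}$, as desired.

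The hard part is exactly this collapse. It amounts to the identity $\Phi(A_{12}+A_{12}^{*})+\Phi(B_{12}+B_{12}^{*})=\Phi\bigl((A_{12}+B_{12})+(A_{12}^{*}+B_{12}^{*})\bigr)$, which by Claim~\ref{1221} is equivalent to the cancellation of the additivity defect on $\mathcal{A}_{12}$ against that on $\mathcal{A}_{21}$; it does \emph{not} follow from Claim~\ref{1221} alone, so a further ingredient is required. I expect to supply it either by first establishing a homogeneity property such as $\Phi(-X_{12})=-\Phi(X_{12})$ on the corners, which combined with the $+$ and $-$ displays above forces both defects to vanish, or by using that $\Phi(P_1)$ and $\Phi(P_2)$ are orthogonal projections, so that multiplication by $\Phi(P_2)$ can be cancelled, invoking primeness of $\mathcal{B}$ to separate the $\mathcal{A}_{12}$- and $\mathcal{A}_{22}$-contributions. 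This is the step where bijectivity and primeness do the real work.
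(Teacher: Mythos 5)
Your first half coincides with the paper's own argument: pick $T$ with $\Phi(T)=\Phi(A_{ij})+\Phi(B_{ij})$, use the standard lemma with $P_i$ (both right-hand products vanish, and $\Phi(0)=0$ by Lemma \ref{0}) to force $T_{ii}=T_{ji}=0$, then turn to $P_j$. You have also located the crux correctly: everything reduces to the collapse
\begin{equation*}
\Phi(A_{ij}+A_{ij}^{*})+\Phi(B_{ij}+B_{ij}^{*})=\Phi\bigl((A_{ij}+B_{ij})+(A_{ij}+B_{ij})^{*}\bigr),
\end{equation*}
after which injectivity pins down $T_{ij}$ and $T_{jj}$ and finishes the claim. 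Your observation that this identity does \emph{not} follow from Claim \ref{1221} is accurate: Claim \ref{1221} splits every term across the corners $\mathcal{A}_{ij}$ and $\mathcal{A}_{ji}$, reducing the identity to the assertion that the additivity defect $\Phi(A_{ij})+\Phi(B_{ij})-\Phi(A_{ij}+B_{ij})$ cancels against the corresponding defect for $A_{ij}^{*},B_{ij}^{*}\in\mathcal{A}_{ji}$; Claim \ref{1221} says nothing about additivity within a single corner, so it can produce neither this cancellation nor the claim itself.

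The gap is that your proposal stops at exactly this point: the collapse is labelled ``the hard part'' but never proved. Neither of your two suggested repairs is carried out, and neither is available as stated. The homogeneity $\Phi(-X_{12})=-\Phi(X_{12})$ has not been established at this stage (and proving it meets the same difficulty), while the route through orthogonality of $\Phi(P_1),\Phi(P_2)$ and primeness of $\mathcal{B}$ is circular in the paper's logical order: that the $\Phi(P_i)$ are projections with $\Phi(P_1)+\Phi(P_2)=I$ (Claim \ref{projection}) and that $\Phi(\mathcal{A}_{ij})=\mathcal{B}_{ij}$ (Claim \ref{cl6}) are proved \emph{later}, via a lemma that quotes the present claim. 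So as a proof your proposal is incomplete. You should know, however, that the paper's own proof is no better at this precise point: it performs exactly the collapse you isolated, in the line $\Phi(2\Re(A_{ij}))+\Phi(2\Re(B_{ij}^{*}))=\Phi(2\Re(A_{ij})+2\Re(B_{ij}^{*}))$, offering only the words ``by Claim \ref{1221}'' as justification --- which, by your own analysis, is not a justification. In short, you have correctly diagnosed a genuine soft spot of the paper's argument, but your proposal does not close it either; its decisive step is missing.
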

Let $T=T_{11}+T_{12}+T_{21}+T_{22}\in \mathcal{A}$ be such that
\begin{equation}\label{bbb2}
\Phi(T)=\Phi(A_{ij})+\Phi(B_{ij}).
\end{equation}
By applying the standard lemma (\ref{1a}) to (\ref{bbb2}) for
$P_{i}$, we get
$$\Phi(TP_{i}+P_{i}T^{*})=\Phi(A_{ij}P_{i}+P_{i}A_{ij}^{*})+\Phi(B_{ij}P_{i}+P_{i}B_{ij}^{*})=\Phi(0)=0,$$
therefore, $\Phi(T_{ii}+T_{ji}+T_{ii}^{*}+T_{ji}^{*})=0$. So
$\Re(T_{ii})=\Re(T_{ji})=0$. Similarly, by using standard lemma
(\ref{2a}) for $P_{i}$, we can obtain $\Im(T_{ii})=\Im(T_{ji})=0$.
Hence $T_{ii}=T_{ji}=0$.\\
Now, we apply standard lemma (\ref{1a}) to (\ref{bbb2}) for $P_{j}$
again, by Claim \ref{1221}, it follows
\begin{eqnarray*}
\Phi(TP_{j}+P_{j}T^{*})&=&\Phi(A_{ij}P_{j}+P_{j}A_{ij}^{*})+\Phi(B_{ij}P_{j}+P_{j}B_{ij}^{*})\\
&=&\Phi(A_{ij}+A_{ij}^{*})+\Phi(B_{ij}+B_{ij}^{*})\\
&=&\Phi(2\Re(A_{ij}))+\Phi(2\Re(B_{ij}^{*}))\\
&=&\Phi(2\Re(A_{ij})+2\Re(B_{ij}^{*}))\\
&=&\Phi(2\Re(A_{ij})+2\Re(B_{ij})).
\end{eqnarray*}
So, we have
$\Phi(T_{ij}+T_{jj}+T_{ij}^{*}+T_{jj}^{*})=\Phi(2\Re(A_{ij}+B_{ij}))$,
or $\Re(T_{ij})=\Re(A_{ij}+B_{ij})$ and $\Re(T_{jj})=0$.\\
Similarly, we can obtain $\Im(T_{ij})=\Im(A_{ij}+B_{ij})$ and
$\Im(T_{jj})=0$. Then $T_{ij}=A_{ij}+B_{ij}$.

\begin{claim}\label{1121}
For every $A_{11}\in\mathcal{A}_{11}$, $C_{21}\in\mathcal{A}_{21}$,
we have
$$\Phi(A_{11}+C_{21})=\Phi(A_{11})+\Phi(C_{21}).$$
\end{claim}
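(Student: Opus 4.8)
The plan is to follow the exact same template that worked for Claims~\ref{1112}--\ref{1121}, since the structure of the proof is essentially uniform across these additivity claims. First I would invoke surjectivity of $\Phi$ to pick an element $T=T_{11}+T_{12}+T_{21}+T_{22}\in\mathcal{A}$ with $\Phi(T)=\Phi(A_{11})+\Phi(C_{21})$, and then aim to show $T=A_{11}+C_{21}$ by pinning down each of the four corner components $T_{ij}$ separately. The mechanism is always the same: apply the Standard Lemma (both the $+$ version \eqref{1a} and the $-$ version \eqref{2a}) to this defining equation, multiplying by the projections $P_1$ and $P_2$, and then use injectivity of $\Phi$ to strip off $\Phi$ from both sides and compare the resulting real and imaginary parts.

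Concretely, I would first test against $P_1$. Computing $A_{11}P_1+P_1A_{11}^{*}=A_{11}+A_{11}^{*}=2\Re(A_{11})$ and $C_{21}P_1+P_1C_{21}^{*}=C_{21}+C_{21}^{*}=2\Re(C_{21})$, the Standard Lemma \eqref{1a} gives $\Phi(TP_1+P_1T^{*})=\Phi(2\Re(A_{11}))+\Phi(2\Re(C_{21}))$. Here is the one genuinely non-routine point: the right-hand side is a sum of two $\Phi$-values, so before I can apply injectivity I must rewrite it as a single $\Phi$-value. I expect to use Claim~\ref{1121}'s companion claims---specifically an already-established additivity of $\Phi$ on self-adjoint combinations drawn from $\mathcal{A}_{11}$ and $\mathcal{A}_{21}$---exactly as Claim~\ref{ijij} used Claim~\ref{1221} to collapse $\Phi(2\Re(A_{ij}))+\Phi(2\Re(B_{ij}))$ into $\Phi(2\Re(A_{ij}+B_{ij}))$. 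Once the right side is a single $\Phi$-value, injectivity yields $2\Re(T_{11}+T_{21})=2\Re(A_{11}+C_{21})$, hence $\Re(T_{11})=\Re(A_{11})$ and $\Re(T_{21})=\Re(C_{21})$. Applying \eqref{2a} against $P_1$ gives the imaginary parts by the identical argument, so $T_{11}=A_{11}$ and $T_{21}=C_{21}$.

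Next I would test against $P_2$. Since both $A_{11}$ and $C_{21}$ live in columns indexed by $1$, we have $A_{11}P_2+P_2A_{11}^{*}=0$ and $C_{21}P_2+P_2C_{21}^{*}=0$, so \eqref{1a} yields $\Phi(TP_2+P_2T^{*})=\Phi(0)+\Phi(0)=0$ by Lemma~\ref{0}. Stripping off the projections, $TP_2+P_2T^{*}=T_{12}+T_{22}+T_{12}^{*}+T_{22}^{*}$, and injectivity forces $\Re(T_{12})=\Re(T_{22})=0$; the $-$ version \eqref{2a} similarly kills the imaginary parts, giving $T_{12}=T_{22}=0$. Combining the two tests yields $T=A_{11}+C_{21}$, which is the claim.

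The main obstacle, as flagged above, is the step where the right-hand side appears as $\Phi(\cdot)+\Phi(\cdot)$ and must be merged into a single $\Phi$-value before injectivity can be invoked; this is the only place the argument appeals to previously proven additivity rather than to the defining multiplicative identity alone. Everything else is the routine real/imaginary-part bookkeeping that recurs verbatim in the earlier claims, so the write-up should be short once that merging is justified.
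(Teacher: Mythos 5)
Your proposal reproduces the paper's own proof essentially verbatim: the same choice of $T=T_{11}+T_{12}+T_{21}+T_{22}$ via surjectivity, the same tests against $P_{1}$ and $P_{2}$ through the Standard Lemma \eqref{1a}/\eqref{2a}, the same real/imaginary-part bookkeeping yielding $T_{11}=A_{11}$, $T_{21}=C_{21}$, $T_{12}=T_{22}=0$. One caution on the merging step you rightly flag as the crux: there is no ``already-established additivity on combinations from $\mathcal{A}_{11}$ and $\mathcal{A}_{21}$'' to appeal to (that is precisely the content of Claim \ref{1121} itself, so as worded your justification is circular); the paper instead cites Claim \ref{1112}, which is relevant because $2\Re(C_{21})=C_{21}+C_{21}^{*}$ has its component $C_{21}^{*}$ in $\mathcal{A}_{12}$ (together with Claim \ref{1221} for the $\mathcal{A}_{21}$ part) --- though it must be said the paper's own justification of this merge is no more detailed than yours.
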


Let $T=T_{11}+T_{12}+T_{21}+T_{22}\in\mathcal{A}$ be such that
\begin{equation}\label{c}
\Phi(T)=\Phi(A_{11})+\Phi(C_{21}).
\end{equation}
By applying the standard lemma (\ref{1a}) to (\ref{c}) for $P_{1}$
and using Claim \ref{1112}, we have
\begin{eqnarray*}
\Phi(TP_{1}+P_{1}T^{*})&=&\Phi(A_{11}P_{1}+P_{1}A_{11}^{*})+\Phi(C_{21}P_{1}+P_{1}C_{21}^{*})\\
&=&\Phi(A_{11}+A_{11}^{*})+\Phi(C_{21}+C_{21}^{*})\\
&=&\Phi(2\Re(A_{11}))+\Phi(2\Re(C_{21}^{*}))\\
&=&\Phi(2\Re(A_{11})+2\Re(C_{21})).
\end{eqnarray*}
Thus,
$\Phi(2\Re(T_{11})+2\Re(T_{21}))=\Phi(2\Re(A_{11})+2\Re(C_{21}))$
which implies $\Re(T_{11})=\Re(A_{11})$ and
$\Re(T_{21})=\Re(C_{21})$. Similarly, we apply the standard lemma
(\ref{2a}) to (\ref{c}) for $P_{1}$, we have
$\Im(T_{11})=\Im(A_{11})$ and $\Im(T_{21})=\Im(C_{21})$. So, $T_{11}=A_{11}$ and $T_{21}=C_{21}$.\\
Now, by applying the standard lemma (\ref{1a}) to (\ref{c}) for
$P_{2}$, it follows
$$\Phi(TP_{2}+P_{2}T^{*})=\Phi(A_{11}P_{2}+P_{2}A_{11}^{*})+\Phi(C_{21}P_{2}+P_{2}C_{21}^{*}).$$
We have $\Phi(2\Re(T_{12})+2\Re(T_{22}))=0$, or
$\Re(T_{12})=\Re(T_{22})=0$. Similarly, we can obtain
$\Im(T_{12})=\Im(T_{22})=0$ by using the standard lemma (\ref{2a})
to (\ref{c}) for $P_{2}$. Then $T_{12}=T_{22}=0$, so we proved
$T=A_{11}+C_{21}$.
\\
\\
Note that $\Phi(B_{12}+D_{22})=\Phi(B_{12})+\Phi(D_{22})$ where
$B_{12}\in\mathcal{A}_{12}$ and $D_{22}\in\mathcal{A}_{22}$ can be
obtained as above.

\begin{claim}\label{11122122}
For every $A_{11}\in\mathcal{A}_{11}$, $B_{12}\in\mathcal{A}_{12}$,
$ {C_{21}}\in\mathcal{A}_{21}$ and $D_{22}\in\mathcal{A}_{22}$ we
have
$$\Phi(A_{11}+B_{12}+C_{21}+D_{22})=\Phi(A_{11})+\Phi(B_{12})+\Phi(C_{21})+\Phi(D_{22}).$$
\end{claim}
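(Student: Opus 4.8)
The plan is to follow the template of Claims \ref{1112}--\ref{1121}. Using surjectivity of $\Phi$, choose $T=T_{11}+T_{12}+T_{21}+T_{22}\in\mathcal{A}$ with
\begin{equation*}
\Phi(T)=\Phi(A_{11})+\Phi(B_{12})+\Phi(C_{21})+\Phi(D_{22}),
\end{equation*}
and then show $T=A_{11}+B_{12}+C_{21}+D_{22}$ corner by corner; injectivity of $\Phi$ then yields the claim. First I would note that the Standard Lemma extends verbatim to a finite sum: if $\Phi(T)=\sum_k\Phi(X_k)$, the same right/left multiplication by $\Phi(P)$ followed by addition gives $\Phi(TP+PT^{*})=\sum_k\Phi(X_kP+PX_k^{*})$ and, with the minus sign, the analogue of (\ref{2a}). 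I will apply this with $P=P_1$ and with $P=P_2$, and for both $\lambda=\pm1$.

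Applying the summed form of (\ref{1a}) with $P=P_1$, the identities $A_{11}P_1+P_1A_{11}^{*}=2\Re(A_{11})$ and $C_{21}P_1+P_1C_{21}^{*}=C_{21}+C_{21}^{*}$ hold, while the $B_{12}$ and $D_{22}$ terms vanish and contribute $\Phi(0)=0$ by Lemma \ref{0}. Thus the right-hand side collapses to $\Phi(2\Re(A_{11}))+\Phi(C_{21}+C_{21}^{*})$, and the left-hand argument is $T_{11}+T_{11}^{*}+T_{21}+T_{21}^{*}$. Recombining the two right-hand images into a single $\Phi$ of the sum $2\Re(A_{11})+C_{21}+C_{21}^{*}$ and invoking injectivity, I read off the four corners separately to obtain $\Re(T_{11})=\Re(A_{11})$ and $T_{21}=C_{21}$ (the $\mathcal{A}_{12}$-corner gives $T_{21}^{*}=C_{21}^{*}$, which is the same information). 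Running the identical computation with (\ref{2a}) for $P_1$ replaces $2\Re$ by $2\ii\Im$ and delivers $\Im(T_{11})=\Im(A_{11})$, so that $T_{11}=A_{11}$ and $T_{21}=C_{21}$.

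Then I repeat the step with $P=P_2$. Now the $A_{11}$ and $C_{21}$ contributions drop out, while $B_{12}P_2+P_2B_{12}^{*}=B_{12}+B_{12}^{*}$ and $D_{22}P_2+P_2D_{22}^{*}=2\Re(D_{22})$, and the left-hand argument becomes $T_{12}+T_{12}^{*}+T_{22}+T_{22}^{*}$. The same recombination, now using Claim \ref{ijij} and the Note $\Phi(B_{12}+D_{22})=\Phi(B_{12})+\Phi(D_{22})$ (together with its symmetric analogue for $\mathcal{A}_{21}\oplus\mathcal{A}_{22}$), together with both signs $\lambda=\pm1$, yields $T_{12}=B_{12}$ and $T_{22}=D_{22}$. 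Combining the two rounds gives $T=A_{11}+B_{12}+C_{21}+D_{22}$, which is exactly what is needed.

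The main obstacle is precisely the recombination step. After each application of the Standard Lemma one is left with a sum such as $\Phi(2\Re(A_{11}))+\Phi(C_{21}+C_{21}^{*})$ whose two arguments live in $\mathcal{A}_{11}$ and in $\mathcal{A}_{12}\oplus\mathcal{A}_{21}$, so merging them into a single $\Phi$ is a three-corner additivity rather than the two-corner additivities already established. This has to be assembled by chaining the earlier Claims \ref{1112}, \ref{1221}, \ref{1121} (and, in the $P_2$ round, Claim \ref{ijij} and the Note) in the right order; this bookkeeping is what makes the final claim more delicate than its predecessors, whereas the corner-by-corner matching itself is routine and identical to the computations already carried out above.
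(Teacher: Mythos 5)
Your proposal is, step for step, the paper's own proof: use surjectivity to pick $T$ with $\Phi(T)=\Phi(A_{11})+\Phi(B_{12})+\Phi(C_{21})+\Phi(D_{22})$, apply the finitely-summed form of the Standard Lemma (\ref{1a}) and (\ref{2a}) with $P_{1}$ and then with $P_{2}$, observe that the $B_{12}$ and $D_{22}$ terms vanish in the $P_{1}$ round while the $A_{11}$ and $C_{21}$ terms vanish in the $P_{2}$ round, and match corners to conclude $T_{11}=A_{11}$, $T_{21}=C_{21}$, $T_{12}=B_{12}$, $T_{22}=D_{22}$. All of your corner computations agree with the paper's, and the paper relies on Claim \ref{1121} and the Note following it in exactly the places you do.

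The one substantive issue is the recombination step that you yourself single out as ``the main obstacle.'' You assert it can be settled by chaining Claims \ref{1112}, \ref{1221}, \ref{1121} (resp.\ Claim \ref{ijij} and the Note) ``in the right order,'' but no order works. After the $P_{1}$ round one must pass from $\Phi(2\Re(A_{11}))+\Phi(C_{21}+C_{21}^{*})$ to $\Phi\bigl(2\Re(A_{11})+C_{21}+C_{21}^{*}\bigr)$, and since $C_{21}+C_{21}^{*}$ has components in both $\mathcal{A}_{21}$ and $\mathcal{A}_{12}$, every chain of the available pairwise claims eventually requires merging a sum spanning two corners with an element of a third corner (for instance $\Phi(2\Re(A_{11})+C_{21})+\Phi(C_{21}^{*})$); none of Claims \ref{1112}, \ref{1221}, \ref{ijij}, \ref{1121} or the Note permits this, as each of them requires both summands to lie in a single corner. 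The same three-corner obstruction recurs in the $P_{2}$ round with $\Phi(B_{12}+B_{12}^{*})+\Phi(2\Re(D_{22}))$. To be fair, this is not a defect relative to the published argument: the paper performs the identical merge, citing Claim \ref{1121} even though $2\Re(C_{21})=C_{21}+C_{21}^{*}$ is not an element of $\mathcal{A}_{21}$, so your attempt reproduces the paper faithfully, unjustified recombination included. But the chaining you promise cannot actually be carried out with the claims as stated, so the obstacle you correctly identified is left unresolved in your write-up just as it is in the paper.
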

Assume $T=T_{11}+T_{12}+T_{21}+T_{22}$ which satisfies in
\begin{equation}\label{d1}
\Phi(T)=\Phi(A_{11})+\Phi(B_{12})+\Phi(C_{21})+\Phi(D_{22}).
\end{equation}
By using the standard lemma ($\ref{1a}$) to ($\ref{d1}$) for $P_{1}$
and Claim \ref{1121}, we obtain
\begin{eqnarray*}
\Phi(TP_{1}+P_{1}T^{*})&=&\Phi(A_{11}P_{1}+P_{1}A_{11}^{*})+\Phi(B_{12}P_{1}+P_{1}B_{12}^{*})\\
&&+\Phi(C_{21}P_{1}+P_{1}C_{21}^{*})+\Phi(D_{22}P_{1}+P_{1}D_{22}^{*})\\
&=&\Phi(2\Re(A_{11})+2\Re(C_{21})).
\end{eqnarray*}
It follows $2\Re(T_{11})+2\Re(T_{21})=2\Re(A_{11})+2\Re(C_{21})$,
hence
$\Re(T_{11})=\Re(A_{11})$ and $\Re(T_{21})=\Re(C_{21})$.\\
Also, one can obtain
$\Im(T_{11})=\Im(A_{11})$ and $\Im(T_{21})=\Im(C_{21})$ by (\ref{2a}).\\
Now, we apply the standard lemma (\ref{1a}) to (\ref{d1}) for
$P_{2}$, it follows
\begin{eqnarray*}
\Phi(TP_{2}+P_{2}T^{*})&=&\Phi(A_{11}P_{2}+P_{2}A_{11}^{*})+\Phi(B_{12}P_{2}+P_{2}B_{12}^{*})\\
&&+\Phi(C_{21}P_{2}+P_{2}C_{21}^{*})+\Phi(D_{22}P_{2}+P_{2}D_{22}^{*})\\
&=&\Phi(2\Re(B_{12})+2\Re(D_{22})).
\end{eqnarray*}
which implies $2\Re(T_{12})+2\Re(T_{22})=2\Re(B_{12})+2\Re(D_{22})$,
therefore $\Re(T_{12})=\Re(B_{12})$ and $\Re(T_{22})=\Re(D_{22})$.
Similarly, we can obtain $\Im(T_{12})=\Im(B_{12})$ and
$\Im(T_{22})=\Im(D_{22})$ by the standard lemma (\ref{2a}). So, we
proved $T=A_{11}+B_{12}+C_{21}+D_{22}$.
\begin{lemma}
Let $\Phi$ satisfy the assumptions of the Main Theorem. Then, for
every $A\in\mathcal{A}$ we have the following
$$\Phi(AI+\lambda IA^{*})=\Phi(A)\Phi(I)+\lambda
\Phi(I)\Phi(A)^{*},$$ where $\lambda\in\{-1,+1\}$.
\end{lemma}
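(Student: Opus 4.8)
The plan is to reduce the desired identity for $P = I_{\mathcal{A}}$ to the two identities already assumed for $P_{1}$ and $P_{2} = I_{\mathcal{A}} - P_{1}$, exploiting the additivity of $\Phi$ accumulated in Claims \ref{1112}--\ref{11122122}. Since $\Phi$ is unital, the left-hand side is $\Phi(AI_{\mathcal{A}} + \lambda I_{\mathcal{A}} A^{*}) = \Phi(A + \lambda A^{*})$, so the target reads $\Phi(A + \lambda A^{*}) = \Phi(A)\Phi(I_{\mathcal{A}}) + \lambda \Phi(I_{\mathcal{A}})\Phi(A)^{*}$. Writing $A = A_{11} + A_{12} + A_{21} + A_{22}$, I would first record the block expansions
$$X := AP_{1} + \lambda P_{1}A^{*} = A_{11} + \lambda A_{11}^{*} + A_{21} + \lambda A_{21}^{*}, \qquad Y := AP_{2} + \lambda P_{2}A^{*} = A_{12} + \lambda A_{12}^{*} + A_{22} + \lambda A_{22}^{*},$$
whose sum is exactly $A + \lambda A^{*}$ because $P_{1} + P_{2} = I_{\mathcal{A}}$.

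The heart of the argument is the additivity step $\Phi(X + Y) = \Phi(X) + \Phi(Y)$. To obtain it I would sort $X$ and $Y$ into the four blocks. Here $X$ lies in $\mathcal{A}_{11} + \mathcal{A}_{12} + \mathcal{A}_{21}$, with $\mathcal{A}_{11}$-part $A_{11} + \lambda A_{11}^{*}$, $\mathcal{A}_{12}$-part $\lambda A_{21}^{*}$ and $\mathcal{A}_{21}$-part $A_{21}$, while $Y$ lies in $\mathcal{A}_{12} + \mathcal{A}_{21} + \mathcal{A}_{22}$. Applying Claim \ref{11122122} separately to $X$, to $Y$ and to $X + Y$ (using $\Phi(0) = 0$ from Lemma \ref{0}), the equality $\Phi(X + Y) = \Phi(X) + \Phi(Y)$ collapses to the two statements $\Phi(A_{12} + \lambda A_{21}^{*}) = \Phi(A_{12}) + \Phi(\lambda A_{21}^{*})$ and $\Phi(A_{21} + \lambda A_{12}^{*}) = \Phi(A_{21}) + \Phi(\lambda A_{12}^{*})$, each an instance of additivity inside a single off-diagonal block and hence supplied by Claim \ref{ijij}.

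With this in hand the computation finishes cleanly. On the left, $\Phi(A + \lambda A^{*}) = \Phi(X) + \Phi(Y)$. On the right, applying the hypothesis to $P_{1}$ and to $P_{2}$ and adding gives $\Phi(X) + \Phi(Y) = \Phi(A)\bigl(\Phi(P_{1}) + \Phi(P_{2})\bigr) + \lambda\bigl(\Phi(P_{1}) + \Phi(P_{2})\bigr)\Phi(A)^{*}$. It then remains only to identify $\Phi(P_{1}) + \Phi(P_{2})$ with $\Phi(I_{\mathcal{A}})$, which is once more Claim \ref{11122122} applied to $P_{1} + P_{2} = I_{\mathcal{A}}$ (taking the two off-diagonal entries to be $0$). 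Substituting yields exactly $\Phi(A)\Phi(I_{\mathcal{A}}) + \lambda\Phi(I_{\mathcal{A}})\Phi(A)^{*}$, as required.

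I expect the bookkeeping in the additivity step to be the only genuine obstacle. One must notice that the summands $X$ and $Y$ each contribute to the off-diagonal blocks $\mathcal{A}_{12}$ and $\mathcal{A}_{21}$, so that Claim \ref{ijij} is really needed to merge those cross terms, whereas the diagonal blocks $\mathcal{A}_{11}$ and $\mathcal{A}_{22}$ each receive a contribution from only one of $X, Y$. This is the fortunate point of the decomposition: no additivity \emph{within} $\mathcal{A}_{11}$ or $\mathcal{A}_{22}$ is invoked, which is essential since such diagonal additivity has not yet been established at this stage of the paper.
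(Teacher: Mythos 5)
Your proposal is correct and takes essentially the same route as the paper: both decompose $A+\lambda A^{*}$ as $(AP_{1}+\lambda P_{1}A^{*})+(AP_{2}+\lambda P_{2}A^{*})$, use Claim \ref{11122122} together with Claim \ref{ijij} to justify splitting $\Phi$ across that sum (the off-diagonal cross terms being the only delicate point), then apply the hypothesis for $P_{1}$ and $P_{2}$ and conclude via $\Phi(P_{1})+\Phi(P_{2})=\Phi(I)$. Your write-up is in fact slightly more careful than the paper's, since it makes explicit the use of $\Phi(0)=0$ and the observation that no diagonal additivity is needed.
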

\begin{proof}
By Claim \ref{ijij} and Claim \ref{11122122}, we can obtain
\begin{eqnarray*}
\Phi(AI+\lambda IA^{*})&=&\Phi(A(P_{1}+P_{2})+\lambda
(P_{1}+P_{2})A^{*})\\
&=&\Phi(AP_{1}+AP_{2}+\lambda P_{1}A^{*}+\lambda P_{2}A^{*})\\
&=&\Phi(A_{11}+A_{21}+A_{12}+A_{22}+\lambda A_{11}^{*}+\lambda
A_{21}^{*}+\lambda A_{12}^{*}+\lambda A_{22}^{*})\\
&=&\Phi((A_{11}+\lambda A_{11}^{*})+(A_{12}+\lambda
A_{21}^{*})+(A_{21}+\lambda
A_{12}^{*})+(A_{22}+\lambda A_{22}^{*}))\\
&=&\Phi(A_{11}+\lambda A_{11}^{*})+\Phi(A_{12}+\lambda
A_{21}^{*})+\Phi(A_{21}+\lambda
A_{12}^{*})\\
&&+\Phi(A_{22}+\lambda A_{22}^{*})\\
&=&\Phi(A_{11}+\lambda A_{11}^{*})+\Phi(A_{12})+\Phi(\lambda
A_{21}^{*})+\Phi(A_{21})+\Phi(\lambda
A_{12}^{*})\\
&&+\Phi(A_{22}+\lambda A_{22}^{*})\\
&=&\Phi(A_{11}+\lambda A_{11}^{*}+A_{21}+\lambda A_{21}^{*})+\Phi(A_{22}+\lambda A_{22}^{*}+A_{12}+\lambda A_{12}^{*})\\
&=&\Phi(AP_{1}+\lambda P_{1}A^{*})+\Phi(AP_{2}+\lambda P_{2}A^{*})\\
&=&\Phi(A)\Phi(P_{1})+\lambda
\Phi(P_{1})\Phi(A)^{*}+\Phi(A)\Phi(P_{2})+\lambda\Phi(P_{2})\Phi(A)^{*}\\
&=&
\Phi(A)(\Phi(P_{1})+\Phi(P_{2}))+\lambda(\Phi(P_{1})+\Phi(P_{2}))\Phi(A)^{*}\\
&=&\Phi(A)\Phi(I)+\lambda\Phi(I)\Phi(A)^{*}
\end{eqnarray*}
\end{proof}
\begin{note}\label{note}
By above lemma, we can write the following
\begin{eqnarray*}
\Phi(2\Re(A))&=&\Phi(AI+IA^{*})\\
&=&\Phi(A)\Phi(I)+\Phi(I)\Phi(A)^{*}\\
&=&\Phi(A)+\Phi(A)^{*}\\
&=&2\Re(\Phi(A)).
\end{eqnarray*}
And similarly, we have
\begin{eqnarray*}
\Phi(2i\Im(A))&=&\Phi(AI-IA^{*})\\
&=&\Phi(A)\Phi(I)-\Phi(I)\Phi(A)^{*}\\
&=&\Phi(A)-\Phi(A)^{*}\\
&=&2i\Im(\Phi(A)).
\end{eqnarray*}
\end{note}

\begin{lemma}\label{p}
Let $\Phi$ satisfy the assumptions of the Main Theorem, we have the
following
\begin{equation}\label{eq1}
\Phi(AP_{i})=\Phi(A)\Phi(P_{i}),
\end{equation}
and
\begin{equation}\label{eq2}
\Phi(P_{i}A)=\Phi(P_{i})\Phi(A),
\end{equation}
for $1\leq i\leq 2$.
\end{lemma}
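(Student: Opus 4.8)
The plan is to read the two identities off the defining relation by separating its self-adjoint and skew-adjoint parts, handling \eqref{eq1} first and then deducing \eqref{eq2} by the substitution $A\mapsto A^{*}$. For \eqref{eq1}, fix $A$ and use $P_{i}^{*}=P_{i}$ to write $AP_{i}+P_{i}A^{*}=AP_{i}+(AP_{i})^{*}=2\Re(AP_{i})$ and $AP_{i}-P_{i}A^{*}=2\ii\Im(AP_{i})$. The hypothesis for $P=P_{i}$ with $\lambda=+1$ and $\lambda=-1$ then reads $\Phi(2\Re(AP_{i}))=\Phi(A)\Phi(P_{i})+\Phi(P_{i})\Phi(A)^{*}$ and $\Phi(2\ii\Im(AP_{i}))=\Phi(A)\Phi(P_{i})-\Phi(P_{i})\Phi(A)^{*}$, while Note~\ref{note} identifies the left-hand sides as $\Phi(AP_{i})+\Phi(AP_{i})^{*}$ and $\Phi(AP_{i})-\Phi(AP_{i})^{*}$. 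Adding cancels the adjoint terms and gives $2\Phi(AP_{i})=2\Phi(A)\Phi(P_{i})$, which is \eqref{eq1}; subtracting gives the by-product $\Phi(AP_{i})^{*}=\Phi(P_{i})\Phi(A)^{*}$, and comparing this with the adjoint of \eqref{eq1} together with the surjectivity of $\Phi$ shows that $\Phi(P_{i})$ is self-adjoint.

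For \eqref{eq2} I would repeat the computation with $A$ replaced by $A^{*}$. The defining relation now involves $A^{*}P_{i}+P_{i}A=(P_{i}A)^{*}+P_{i}A=2\Re(P_{i}A)$ and $A^{*}P_{i}-P_{i}A=(P_{i}A)^{*}-P_{i}A=-2\ii\Im(P_{i}A)$, with right-hand sides $\Phi(A^{*})\Phi(P_{i})\pm\Phi(P_{i})\Phi(A^{*})^{*}$. Pushing $\Phi$ through these self- and skew-adjoint elements by Note~\ref{note} and subtracting the $\lambda=-1$ equation from the $\lambda=+1$ equation cancels the $\Phi(A^{*})\Phi(P_{i})$ terms and leaves $2\Phi(P_{i}A)=2\Phi(P_{i})\Phi(A^{*})^{*}$. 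It then suffices to replace $\Phi(A^{*})^{*}$ by $\Phi(A)$ --- that is, to use that $\Phi$ is $*$-preserving --- to arrive at \eqref{eq2}.

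The delicate point, and the main obstacle, is this last manipulation of the involution in \eqref{eq2}. In \eqref{eq1} both $2\Re(AP_{i})$ and $2\ii\Im(AP_{i})$ are presented with the signs that let Note~\ref{note} apply verbatim, whereas for $P_{i}A$ the $\lambda=-1$ combination emerges as $-2\ii\Im(P_{i}A)$, so one must track the sign when passing $\Phi$ across a skew-adjoint element, and the factor produced is $\Phi(A^{*})^{*}$ rather than $\Phi(A)$. What has to be secured is therefore the $*$-compatibility of $\Phi$. From Note~\ref{note} and $\Phi(0)=0$ one readily gets that $\Phi$ preserves self-adjointness and skew-adjointness (put $A=A^{*}$, respectively $A=-A^{*}$); feeding this into the block-additivity Claims \ref{ijij} and \ref{11122122} to interchange $\Phi(A^{*})$ with $\Phi(A)^{*}$ is the step that closes the argument for \eqref{eq2}.
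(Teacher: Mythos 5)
Your argument for \eqref{eq1} is correct, and it is in fact tidier than the paper's own. The paper also adds the two defining identities to produce $2\Phi(A)\Phi(P_{i})$ on the right-hand side, but to recognize the left-hand side as $2\Phi(AP_{i})$ it decomposes $AP_{i}\pm P_{i}A^{*}$ into Peirce corners and invokes Claims \ref{1221} and \ref{1121} before using Note~\ref{note}; you shortcut this by observing $AP_{i}+P_{i}A^{*}=2\Re(AP_{i})$ and $AP_{i}-P_{i}A^{*}=2i\Im(AP_{i})$ and applying Note~\ref{note} directly to the element $AP_{i}$, which is legitimate since the Note holds for every element of $\mathcal{A}$.

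Your argument for \eqref{eq2}, however, has a genuine gap: it ends by invoking $\Phi(A^{*})^{*}=\Phi(A)$, i.e.\ $*$-preservation of $\Phi$, and this is not available at this point. In the paper, $*$-preservation is the very last step: it is deduced from full additivity (by comparing $\Phi(A+A^{*})=\Phi(A)+\Phi(A^{*})$ with $\Phi(A+A^{*})=\Phi(A)+\Phi(A)^{*}$), and full additivity rests on Claim~\ref{iijj}, whose proof uses both \eqref{eq1} and \eqref{eq2}; so assuming $*$-preservation inside Lemma~\ref{p} is circular. Your proposed patch does not close the circle: Note~\ref{note} together with $\Phi(0)=0$ does show that $\Phi$ sends self-adjoint elements to self-adjoint ones and skew-adjoint to skew-adjoint, but to upgrade this to $\Phi(A^{*})=\Phi(A)^{*}$ for general $A$ you would have to split $A=\Re(A)+i\Im(A)$ and pull $\Phi$ through that sum, and the additivity statements available at this stage (Claims \ref{1112}--\ref{11122122}) concern sums spread across the Peirce corners $\mathcal{A}_{ij}$, whereas $\Re(A)$ and $i\Im(A)$ each have components in all four corners; additivity across the Hermitian decomposition needs, in particular, additivity on the diagonal corners, which is exactly Claim~\ref{iijj}, proved after (and by means of) this lemma. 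Note also that the ``symmetric'' manipulation --- subtracting instead of adding the two hypotheses --- only returns the adjoint identity $\Phi(AP_{i})^{*}=\Phi(P_{i})\Phi(A)^{*}$, so \eqref{eq2} cannot be extracted from the relation for the pair $(A,P_{i})$ alone; some genuinely new input is required. To be fair, the paper itself proves only \eqref{eq1} and dismisses \eqref{eq2} as analogous, so the obstruction you ran into is real and is glossed over in the original text as well.
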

\begin{proof}
We only prove the equation (\ref{eq1}).\\
Let $A=\sum_{i,j=1}^{2}A_{ij}$, by assumption of the Main Theorem
together, we have
$$\Phi(AP_{i}+P_{i}A^{*})=\Phi(A)\Phi(P_{i})+\Phi(P_{i})\Phi(A)^{*},$$
$$\Phi(AP_{i}-P_{i}A^{*})=\Phi(A)\Phi(P_{i})-\Phi(P_{i})\Phi(A)^{*}.$$
Add these two equations together, by Claim \ref{1221}, Claim
\ref{1121} and Note \ref{note}, we have
\begin{eqnarray*}
2\Phi(A)\Phi(P_{i})&=&\Phi(AP_{i}+P_{i}A^{*})+\Phi(AP_{i}-P_{i}A^{*})\\
&=&\Phi(A_{ii}+A_{ji}+A_{ii}^{*}+A_{ji}^{*})+\Phi(A_{ii}+A_{ji}-A_{ii}^{*}-A_{ji}^{*})\\
&=&\Phi(A_{ii}+A_{ii}^{*})+\Phi(A_{ji}+A_{ji}^{*})+\Phi(A_{ii}-A_{ii}^{*})\\
&&+\Phi(A_{ji}-A_{ji}^{*})\\
&=&\Phi(2\Re(A_{ii}))+\Phi(2i\Im(A_{ii}))+\Phi(A_{ji})+\Phi(A_{ji}^{*})\\
&&+\Phi(A_{ji})-\Phi(A_{ji}^{*})\\
&=&2\Re(\Phi(A_{ii}))+2i\Im(\Phi(A_{ii}))+2\Phi(A_{ji})\\
&=&2\Phi(A_{ii}+A_{ji})\\
&=&2\Phi(AP_{i}).
\end{eqnarray*}

\end{proof}
\begin{claim}\label{projection}
$\Phi$ preserves projections $P_{i}$ $(i=1,2)$ in both directions.
\end{claim}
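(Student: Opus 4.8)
The plan is to show first that $Q_i := \Phi(P_i)$ is a self-adjoint idempotent, and then to read off the reverse implication from the symmetry between $\Phi$ and $\Phi^{-1}$. The whole computation rests on feeding the defining identity the pair $A = P = P_i$ for both signs $\lambda = \pm 1$ and exploiting that $P_i^2 = P_i = P_i^*$.

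First I would take $\lambda = -1$ with $A = P = P_i$. Since $P_iP_i - P_iP_i^{*} = 0$ and $\Phi(0)=0$ by Lemma \ref{0}, the identity collapses to $0 = Q_i^2 - Q_iQ_i^{*}$, that is $Q_i^2 = Q_iQ_i^{*}$. Next I would take $\lambda = +1$ with $A = P = P_i$; now $P_iP_i + P_iP_i^{*} = 2P_i = 2\Re(P_i)$, so by Note \ref{note} the left-hand side is $\Phi(2\Re(P_i)) = 2\Re(Q_i) = Q_i + Q_i^{*}$, while the right-hand side is $Q_i^2 + Q_iQ_i^{*}$. Combining this with the previous relation gives $Q_i + Q_i^{*} = 2Q_i^2$, i.e. $\Re(Q_i) = Q_i^2$; in particular $Q_i^2$ is self-adjoint.

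The core step is to deduce from $Q_i^2 = Q_iQ_i^{*}$ that $Q_i$ is self-adjoint. Rewriting this as $Q_i(Q_i - Q_i^{*}) = 0$ and using $Q_i - Q_i^{*} = 2i\,\Im(Q_i)$ gives $Q_i\,\Im(Q_i) = 0$. Writing $Q_i = H + iK$ with $H = \Re(Q_i)$ and $K = \Im(Q_i)$ self-adjoint, this reads $HK + iK^2 = 0$, and taking adjoints gives $KH - iK^2 = 0$, i.e. $K^2 = iHK = -iKH$. Substituting these into $K^3 = K\cdot K^2 = iKHK = i(iK^2)K = -K^3$ forces $K^3 = 0$, and since $K$ is self-adjoint this yields $K = 0$. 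Hence $Q_i = H = Q_i^{*}$, and then $\Re(Q_i) = Q_i^2$ becomes $Q_i = Q_i^2$, so $Q_i$ is a projection. This self-adjointness extraction is the only nonroutine point; everything preceding it is bookkeeping with the defining identity and the Note.

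Finally, to obtain complementarity and the reverse direction: applying Claim \ref{11122122} to $P_1 \in \mathcal{A}_{11}$ and $P_2 \in \mathcal{A}_{22}$ gives $Q_1 + Q_2 = \Phi(P_1 + P_2) = \Phi(I_{\mathcal{A}}) = I_{\mathcal{B}}$, so $Q_1, Q_2$ are orthogonal complementary projections with $Q_1Q_2 = Q_2Q_1 = 0$. For the opposite direction I would note that $\Phi^{-1}$ is again bijective and unital and that, because $\Phi$ is a bijection satisfying the defining identity, $\Phi^{-1}$ satisfies the same identity relative to the projections $Q_1$ and $Q_2 = I_{\mathcal{B}} - Q_1$; thus $\Phi^{-1}$ carries $Q_i$ back to the projection $P_i$, and the preservation of $P_i$ indeed holds in both directions.
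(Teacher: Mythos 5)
Your proposal is correct, and its overall skeleton (show $Q_i=\Phi(P_i)$ is a self-adjoint idempotent, then run the argument for $\Phi^{-1}$ to get the reverse direction) matches the paper's, but your key step is executed by a genuinely different and heavier route. The paper never touches the $\lambda=-1$ identity at $A=P=P_i$ and needs no spectral reasoning: it puts $A=I$, $P=P_i$ into the defining identity to get $\Phi(2P_i)=\Phi(I)\Phi(P_i)+\Phi(P_i)\Phi(I)^{*}=2\Phi(P_i)$ by unitality, and separately $\Phi(2P_i)=\Phi(P_iI+IP_i^{*})=\Phi(P_i)+\Phi(P_i)^{*}$ from the extended lemma (equivalently Note \ref{note}); comparing the two gives $2Q_i=Q_i+Q_i^{*}$, hence $Q_i=Q_i^{*}$ in one line, after which $A=P=P_i$ with $\lambda=+1$ yields $2Q_i=\Phi(2P_i)=Q_i^{2}+Q_iQ_i^{*}=2Q_i^{2}$ and idempotency. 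You instead derive $Q_i^{2}=Q_iQ_i^{*}$ from $\lambda=-1$, get $\Re(Q_i)=Q_i^{2}$ from $\lambda=+1$, and then extract $\Im(Q_i)=0$ via the computation $K^{3}=-K^{3}$ together with the fact that a self-adjoint element of a $C^{*}$-algebra with $K^{3}=0$ must vanish. That computation is correct, and your treatment of complementarity ($Q_1+Q_2=I$ via Claim \ref{11122122}) and of the converse via $\Phi^{-1}$ is at the same level of rigor as the paper's. Two observations: first, you overlooked the relation $\Phi(2P_i)=2\Phi(P_i)$, which comes for free from unitality and collapses your hardest step; second, your self-adjointness extraction genuinely uses the $C^{*}$-norm structure (no nonzero nilpotent self-adjoint elements), whereas the paper's argument at this point is purely algebraic given its lemmas, so it would survive in a more general unital $*$-ring where your nilpotency step would not.
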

Let $P_{i}$  be  projections, we have
$$2\Phi(P_{i})=\Phi(I)\Phi(P_{i})+\Phi(P_{i})\Phi(I)^{*}=\Phi(IP_{i}+P_{i}I^{*})=\Phi(2P_{i}).$$
Also,
$$\Phi(2P_{i})=\Phi(P_{i}+P_{i}^{*})=\Phi(P_{i})\Phi(I)+\Phi(I)\Phi(P_{i})^{*},$$
we can write $2\Phi(P_{i})=\Phi(P_{i})+\Phi(P_{i})^{*}$. So,
$\Phi(P_{i})=\frac{\Phi(P_{i})+\Phi(P_{i})^{*}}{2}$. Let
$Q_{i}=\frac{\Phi(P_{i})+\Phi(P_{i})^{*}}{2}$, therefore, $\Phi(P_{i})=Q_{i}$.\\
On the other hand, we have the following
$$\Phi(P_{i}P_{i}+P_{i}P_{i}^{*})=\Phi(P_{i})\Phi(P_{i})+\Phi(P_{i})\Phi(P_{i})^{*}.$$
From above equation, we have $\Phi(2P_{i})=2Q_{i}^{2}.$ So,
$2\Phi(P_{i})=\Phi(2P_{i})=2Q_{i}^{2}$, hence we can say $Q_{i}=Q_{i}^{2}$.\\
Conversely, assume that $\Phi(P_{i})$ is a projection. Since
$\Phi^{-1}$ has the same property as $\Phi$ has, a similar
discussion implies that $P_{i}$ is a projection.
\\

Now, Claim \ref{projection} ensures that there exist nontrivial
projections $Q_{i}$ $(i=1,2)$ such that $\Phi(P_{i})=Q_{i}$. By
Claim \ref{11122122}, $Q_{1}+Q_{2}=I$. We can write
$\mathcal{B}=\sum_{i,j=1}^{2}\mathcal{B}_{ij}$ where
$\mathcal{B}_{ij}=Q_{i}\mathcal{B}Q_{j},\ i,j=1,2$.

\begin{claim}\label{cl6}
$\Phi(\mathcal{A}_{ij})=\mathcal{B}_{ij}$
\end{claim}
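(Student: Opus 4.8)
The plan is to establish the two inclusions $\Phi(\mathcal{A}_{ij})\subseteq\mathcal{B}_{ij}$ and $\mathcal{B}_{ij}\subseteq\Phi(\mathcal{A}_{ij})$ separately: the first directly from Lemma \ref{p}, and the second by observing that $\Phi^{-1}$ satisfies all the hypotheses of the Main Theorem and then invoking Lemma \ref{p} for $\Phi^{-1}$.

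First I would fix $A_{ij}\in\mathcal{A}_{ij}=P_{i}\mathcal{A}P_{j}$. Since $P_{i}$ and $P_{j}$ are projections we have $P_{i}A_{ij}=A_{ij}=A_{ij}P_{j}$. Applying the two identities of Lemma \ref{p}, namely $\Phi(P_{i}A)=\Phi(P_{i})\Phi(A)$ (equation (\ref{eq2})) and $\Phi(AP_{j})=\Phi(A)\Phi(P_{j})$ (equation (\ref{eq1})), together with $\Phi(P_{i})=Q_{i}$ from Claim \ref{projection}, I obtain $\Phi(A_{ij})=Q_{i}\Phi(A_{ij})$ and $\Phi(A_{ij})=\Phi(A_{ij})Q_{j}$. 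Combining these gives $\Phi(A_{ij})=Q_{i}\Phi(A_{ij})Q_{j}\in Q_{i}\mathcal{B}Q_{j}=\mathcal{B}_{ij}$, so that $\Phi(\mathcal{A}_{ij})\subseteq\mathcal{B}_{ij}$.

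For the reverse inclusion, the key observation is that $\Phi^{-1}$ meets exactly the same requirements as $\Phi$. Indeed $\Phi^{-1}$ is bijective and unital, and substituting $B=\Phi(A)$ and $Q=\Phi(P)$ into the defining identity shows that $\Phi^{-1}(BQ+\lambda QB^{*})=\Phi^{-1}(B)\Phi^{-1}(Q)+\lambda\Phi^{-1}(Q)\Phi^{-1}(B)^{*}$ holds for $Q\in\{Q_{1},Q_{2}\}$. By Claim \ref{projection} the $Q_{i}$ are projections, and by Claim \ref{11122122} we have $Q_{1}+Q_{2}=I_{\mathcal{B}}$, so $Q_{1}$ is a nontrivial projection with $Q_{2}=I_{\mathcal{B}}-Q_{1}$. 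Thus $\Phi^{-1}$ satisfies the hypotheses of the Main Theorem with the roles of $\mathcal{A}$ and $\mathcal{B}$ interchanged, and in particular the entire chain of claims leading to Lemma \ref{p} applies verbatim to $\Phi^{-1}$. Repeating the computation of the previous paragraph for $\Phi^{-1}$ and $B_{ij}\in\mathcal{B}_{ij}=Q_{i}\mathcal{B}Q_{j}$ yields $\Phi^{-1}(B_{ij})=P_{i}\Phi^{-1}(B_{ij})P_{j}\in\mathcal{A}_{ij}$, i.e. $\mathcal{B}_{ij}\subseteq\Phi(\mathcal{A}_{ij})$. Combining the two inclusions gives the claim.

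I expect the only genuine obstacle to be the bookkeeping needed to confirm that $\Phi^{-1}$ really inherits every hypothesis, in particular that its distinguished projections are precisely $Q_{1},Q_{2}$ and that they are nontrivial and complementary; this is what lets me reuse Lemma \ref{p} rather than reprove it. Once this is in place the argument is purely formal, reducing to the identities (\ref{eq1}) and (\ref{eq2}) and the idempotency of the $P_{i}$ and $Q_{i}$.
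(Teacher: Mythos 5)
Your proof is correct, but its second half follows a genuinely different route from the paper's. The forward inclusion is the same in substance: both you and the paper obtain $\Phi(\mathcal{A}_{ij})\subseteq\mathcal{B}_{ij}$ from Lemma \ref{p} together with $\Phi(P_{i})=Q_{i}$. For the reverse inclusion, however, the paper never leaves $\Phi$: since $\Phi$ is surjective, $\Phi(\mathcal{A})=\mathcal{B}$, and feeding this through Lemma \ref{p} gives
\[
\mathcal{B}_{ij}=Q_{i}\mathcal{B}Q_{j}=\Phi(P_{i})\Phi(\mathcal{A})\Phi(P_{j})=\Phi(P_{i}\mathcal{A}P_{j})=\Phi(\mathcal{A}_{ij}),
\]
so both inclusions drop out of one line (the paper phrases this loosely as ``each partition can cover its corresponding partition,'' but that is the content). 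You instead pass to $\Phi^{-1}$: you verify it is bijective and unital, that it satisfies the product identity for $Q\in\{Q_{1},I_{\mathcal{B}}-Q_{1}\}$ (your substitution argument is exactly right), that $Q_{1},Q_{2}$ are complementary projections, and then you rerun the entire chain of claims up to Lemma \ref{p} for $\Phi^{-1}$. This is legitimate: there is no circularity, since nothing up to and including Lemma \ref{p} and Claim \ref{projection} depends on Claim \ref{cl6}, and it is the same symmetry trick the paper itself uses at the end of Claim \ref{projection} (``$\Phi^{-1}$ has the same property as $\Phi$''). The trade-off is that your route carries the bookkeeping you flag --- including nontriviality of $Q_{1}$, which needs injectivity, $\Phi(0)=0$ and unitality, not just Claims \ref{projection} and \ref{11122122} --- whereas the paper's route gets the surjectivity half essentially for free from surjectivity of $\Phi$ itself, with no need to invert.
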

As we assumed, let $P_{i}\in\mathcal{A}_{ii}$ and
$\Phi(P_{i})=Q_{i}\in\mathcal{B}_{ii}$. Since $\Phi$ is surjective
we have $\Phi(\mathcal{A})=\mathcal{B}$. By multiplying the left and
the right side of latter equation by $\Phi(P_{i})$ and $\Phi(P_{j})$
respectively, we have
$$\Phi(P_{i})\Phi(\mathcal{A})\Phi(P_{j})=\Phi(P_{i})\mathcal{B}\Phi(P_{j})\subseteq
\mathcal{B}_{ij}.$$ So, $\Phi(\mathcal{A}_{ij})\subseteq
\mathcal{B}_{ij}$. Similarly, we can prove
$\Phi(\mathcal{A}_{ii})\subseteq \mathcal{B}_{ii},$
$\Phi(\mathcal{A}_{ji})\subseteq \mathcal{B}_{ji},$
$\Phi(\mathcal{A}_{jj})\subseteq \mathcal{B}_{jj}$.
 For the
converse, because of surjectivity there are some members in
$\mathcal{B}_{ij}$ which should be covered by members of
$\mathcal{A}$. On the other hand, we know that each partition can
cover its corresponding partition. So,
$\Phi(\mathcal{A}_{ij})=\mathcal{B}_{ij}$.

We should mention here that we imply the primeness property just in
this claim.
\begin{claim}\label{iijj}
For every $A_{ii}, B_{ii}\in \mathcal{A}_{ii}$, $1\leq i\leq 2$ we
have
$$\Phi(A_{ii}+B_{ii})=\Phi(A_{ii})+\Phi(B_{ii}).$$
\end{claim}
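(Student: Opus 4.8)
The plan is to localise $T$ first and then to detect it through its action on the off-diagonal corners. By surjectivity choose $T\in\mathcal{A}$ with $\Phi(T)=\Phi(A_{ii})+\Phi(B_{ii})$; it suffices to prove $T=A_{ii}+B_{ii}$. Since $A_{ii},B_{ii}\in\mathcal{A}_{ii}$, Claim \ref{cl6} gives $\Phi(A_{ii}),\Phi(B_{ii})\in\mathcal{B}_{ii}$, so $\Phi(T)\in\mathcal{B}_{ii}$; because $\Phi(\mathcal{A}_{ii})=\mathcal{B}_{ii}$ this already forces $T\in\mathcal{A}_{ii}$. Thus $T=T_{ii}$ from the outset and the entire problem reduces to identifying this single block.

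Next I would note that the template of Claims \ref{1112}--\ref{11122122} is powerless here. Feeding $\Phi(T)=\Phi(A_{ii})+\Phi(B_{ii})$ into the Standard Lemma with $P_j$ ($j\neq i$) returns only $0=0$, while with $P_i$ it returns, after invoking Note \ref{note}, the tautologies $\Phi(2\Re T)=\Phi(2\Re T)$ and $\Phi(2i\Im T)=\Phi(2i\Im T)$. The reason is structural: the operations $A\mapsto AP_i\pm P_iA^{*}$ are real-linear in $A$ and, restricted to $\mathcal{A}_{ii}$, collapse to the symmetrisations $X\mapsto X\pm X^{*}$, which cannot distinguish two elements sharing the block $\mathcal{A}_{ii}$. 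A genuinely new ingredient is therefore needed, and it must record how $T$ multiplies the corner blocks, where additivity is already in hand.

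Accordingly, for every $C_{ij}\in\mathcal{A}_{ij}$ with $j\neq i$ I would aim to establish
$$\Phi(TC_{ij})=\Phi(A_{ii}C_{ij})+\Phi(B_{ii}C_{ij}).$$
Granting this, Claim \ref{ijij} (additivity inside $\mathcal{A}_{ij}$) rewrites the right-hand side as $\Phi\bigl((A_{ii}+B_{ii})C_{ij}\bigr)$, and injectivity yields $TC_{ij}=(A_{ii}+B_{ii})C_{ij}$ for all such $C_{ij}$. Putting $R=T-(A_{ii}+B_{ii})\in\mathcal{A}_{ii}$ we then have $R\,\mathcal{A}_{ij}=0$; since $R=RP_i$, this gives $R\mathcal{A}P_j=RP_i\mathcal{A}P_j=0$, and as $P_j\neq 0$ primeness forces $R=0$, i.e. $T=A_{ii}+B_{ii}$. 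The case $i=2$ is identical after interchanging $P_1$ and $P_2$.

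The main obstacle is the displayed identity, which is really the multiplicativity $\Phi(X_{ii}C_{ij})=\Phi(X_{ii})\Phi(C_{ij})$ of a diagonal factor against a corner factor: once it holds, right-multiplying $\Phi(T)=\Phi(A_{ii})+\Phi(B_{ii})$ by $\Phi(C_{ij})$ closes the step. The difficulty is that the hypothesis only supplies the product $AP\pm PA^{*}$ when $P$ equals one of the projections $P_1,P_2$, so the two-variable product $X_{ii}\bullet C_{ij}$ is never directly presented to $\Phi$; this relation must be extracted from Lemma \ref{p} (multiplicativity against $P_i$), the decomposition $\mathcal{B}=\sum\mathcal{B}_{ij}$ of Claim \ref{cl6}, and a further appeal to primeness. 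I expect this multiplicativity lemma to carry essentially all the technical weight of the argument.
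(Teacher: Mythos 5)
Your surrounding architecture is fine, but the proof has its core missing. The localization $T\in\mathcal{A}_{ii}$ (via Claim \ref{cl6} and injectivity) is valid, the observation that the Standard Lemma tests yield only tautologies on a diagonal block is correct, and the closing step — from $TC_{ij}=(A_{ii}+B_{ii})C_{ij}$ for all $C_{ij}\in\mathcal{A}_{ij}$ to $T=A_{ii}+B_{ii}$ by primeness of $\mathcal{A}$ — is sound. But everything has been pushed into the displayed identity $\Phi(TC_{ij})=\Phi(A_{ii}C_{ij})+\Phi(B_{ii}C_{ij})$, i.e.\ into the right-multiplicativity $\Phi(X_{ii}C_{ij})=\Phi(X_{ii})\Phi(C_{ij})$, which you do not prove and which cannot be ``extracted from Lemma \ref{p}, Claim \ref{cl6} and primeness'' in the way you hope. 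Lemma \ref{p} only licenses fusing or splitting a factor $\Phi(P_i)$ or $\Phi(P_j)$; if you test your conjectured identity against any available element, say $\Phi(Y_{ji})=\Phi(P_jY)\Phi(P_i)$, the projection factor is simply absorbed ($\Phi(P_i)\Phi(X_{ii}C_{ij})=\Phi(X_{ii}C_{ij})$, $\Phi(P_i)\Phi(X_{ii})=\Phi(X_{ii})$) and you are left staring at the same unproved equation multiplied by $\Phi(P_jY)$. Multiplicativity of $\Phi$ against anything other than the two projections $P_1,P_2$ is never established in the paper — its final conclusion is only $*$-additivity — so your reduction replaces the claim by a statement at least as hard as the claim itself.

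The paper avoids exactly this trap by never multiplying on the right by a general $\Phi(C_{ij})$ and by not restricting to $\mathcal{A}_{ii}$ too early. It proves the column identity $\Phi(AP_i+BP_i)=\Phi(AP_i)+\Phi(BP_i)$ for \emph{all} $A,B\in\mathcal{A}$: test on the left with $\Phi(T_{ij})$, factor it as $\Phi(P_iT)\Phi(P_j)$ by Lemma \ref{p}, and let the inner $\Phi(P_j)$ fuse with $\Phi(AP_i+BP_i)$ to produce $\Phi(A_{ji}+B_{ji})$ — a sum sitting in the off-diagonal block $\mathcal{A}_{ji}$, where Claim \ref{ijij} already gives additivity; refactoring with Lemma \ref{p} turns the result into $\Phi(T_{ij})\bigl(\Phi(AP_i)+\Phi(BP_i)\bigr)$, primeness of $\mathcal{B}$ together with Claim \ref{cl6} cancels the test element, and finally left multiplication by $\Phi(P_i)$ specializes (\ref{eq5}) to $P_iAP_i$, $P_iBP_i$, i.e.\ to $\mathcal{A}_{ii}$. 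Note the decisive structural point: with $A,B$ kept general, the fusion step lands off-diagonal where additivity is known; if instead $A,B\in\mathcal{A}_{ii}$ from the outset — which is where your localization of $T$ puts you — then $\Phi(P_j)\Phi(A+B)=\Phi(0)=0$ and the whole mechanism degenerates to $0=0$, the very tautology you noticed. The information has to be harvested at the level of the full column $\mathcal{A}P_i$ \emph{before} projecting down to the diagonal block, and that is the idea your proposal lacks.
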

First, we will prove that
$\Phi(AP_{i}+BP_{i})=\Phi(AP_{i})+\Phi(BP_{i})$ for every
$A,B\in \mathcal{A}$.\\
By Lemma \ref{p}, Claim \ref{ijij} and for every
$T_{ij}\in\mathcal{A}$ such that $i\neq j$ we obtain
\begin{eqnarray*}
\Phi(T_{ij})\Phi(AP_{i}+BP_{i})&=&\Phi(P_{i}T)\Phi(P_{j})\Phi(AP_{i}+BP_{i})\\
&=&\Phi(P_{i}T)\Phi(P_{j}AP_{i}+P_{j}BP_{i})\\
&=&\Phi(P_{i}T)\Phi(A_{ji}+B_{ji})\\
&=&\Phi(P_{i}T)\Phi(A_{ji})+\Phi(P_{i}T)\Phi(B_{ji})\\
&=&\Phi(P_{i}TP_{j})\Phi(AP_{i})+\Phi(P_{i}TP_{j})\Phi(BP_{i})\\
&=&\Phi(T_{ij})(\Phi(AP_{i})+\Phi(BP_{i})).
\end{eqnarray*}
By the primeness of $\mathcal{B}$ and Claim \ref{cl6}, we have
\begin{equation}\label{eq5}
\Phi(AP_{i}+BP_{i})=\Phi(AP_{i})+\Phi(BP_{i}).
\end{equation}
 Now, multiply the
left side of equation (\ref{eq5}) by $\Phi(P_{i})$ and use Lemma
\ref{p}, we obtain
$$\Phi(P_{i}AP_{i}+P_{i}BP_{i})=\Phi(P_{i}AP_{i})+\Phi(P_{i}BP_{i}).$$
\\
So, additivity of $\Phi$ comes from Claim \ref{ijij},
\ref{11122122}, \ref{iijj}.
\\
\\
Since $\Phi$ is additive, we have
$$\Phi(A+A^{*})=\Phi(A)+\Phi(A^{*}),$$
also, by (\ref{1a}), it follows
$$\Phi(A+A^{*})=\Phi(A)+\Phi(A)^{*}.$$
So, by two equations we can write $\Phi(A^{*})=\Phi(A)^{*}$.

\end{document}